\title{Cycles in Oriented 3-graphs}
\author{Imre Leader\thanks{Department of Pure Mathematics and Mathematical Statistics, Centre for Mathematical Sciences, University of Cambridge, Wilberforce Road, Cambridge CB3 0WB, United Kingdom. Email: I.Leader@dpmms.cam.ac.uk.} \and Ta Sheng Tan\thanks{Institute of Mathematical Sciences, University of Malaya, 50603 Kuala Lumpur, Malaysia. Email: tstan@um.edu.my. This author acknowledges support received from the University Malaya Research Fund Assistance (BKP) via grant BK021-­2013.}}
\newtheorem{thm}{Theorem}[section]
\newtheorem{lemma}[thm]{Lemma}
\newtheorem{corollary}[thm]{Corollary}
\newtheorem{conjecture}[thm]{Conjecture}
\theoremstyle{remark}
\begin{document}

\maketitle
\begin{abstract}
An oriented 3-graph consists of a family of triples (3-sets), each of which is given one of its two possible cyclic orientations. A cycle in an oriented 3-graph is a positive sum of some of the triples that gives weight zero to each 2-set. 

Our aim in this paper is to consider the following question: how large can the girth of an oriented 3-graph (on $n$ vertices) be? We show that there exist oriented 3-graphs whose shortest cycle has length $\frac{n^2}{2}(1+o(1))$: this is asymptotically best possible. We also show that there exist 3-tournaments whose shortest cycle has length $\frac{n^2}{3}(1+o(1))$, in complete contrast to the case of 2-tournaments. 
\end{abstract}

\section{Introduction}

An \emph{oriented $3$-graph} on $n$ vertices consists of a family of triples (3-sets), each of which is given one of its two possible cyclic orientations. A \emph{$3$-tournament} is a complete oriented 3-graph (all triples from the ground set are oriented).

Linial and Morgenstern~\cite{linial} introduced a notion of `cycle' in an oriented 3-graph. Roughly speaking (we will give a precise definition at the start of Section 2), a cycle in an oriented 3-graph is a positive sum of some of the triples that gives weight zero to each 2-set. Linial and Morgenstern were interested in acyclic 3-tournaments (i.e. 3-tournaments not containing a cycle). They also considered cycles in higher order tournaments. (See also \cite{leader} for other results on 3-tournaments and higher order tournaments.)


Our aim in this paper is to consider the following natural question: if an oriented 3-graph (or 3-tournament) on $n$ vertices contains a cycle, how short a cycle must it contain? 
The analogous question in oriented graphs on $n$ vertices is trivial: the shortest cycle can have length $n$.
In the case of a tournament (2-tournament), it is straightforward to see that if a tournament has a (directed) cycle, it must contain a directed triangle. See Moon~\cite{moon} for background and many results on tournaments.

To be little more precise, an oriented 3-graph can be denoted by $G=(V,\mathcal{E})$, where $V$ is the vertex set (unless otherwise stated, $V=[n]=\{1,2,\ldots, n\}$ with the natural ordering) and $\mathcal{E}$ is the set of oriented triples.
Given a triple $F = \{a,b,c\}$, it can be oriented (in an oriented 3-graph) either as 

\begin{center}
\setlength{\unitlength}{0.8cm}
\begin{picture}(10,3)
  \thicklines
  \put(1,0.5){\line(1,0){2}}
  \put(2,2.5){\line(-1,-2){1}}
  \put(2,2.5){\line(1,-2){1}}
  \put(1.9,2.6){$a$}
  \put(0.7,0.4){$c$}
  \put(3.1,0.4){$b$}
  \put(5,1.4){\text{or}}
  \put(7,0.5){\line(1,0){2}}
  \put(8,2.5){\line(-1,-2){1}}
  \put(8,2.5){\line(1,-2){1}}
  \put(7.9,2.6){$a$}
  \put(6.7,0.4){$c$}
  \put(9.1,0.4){$b$}
  \thinlines
  \put(2.65,0.65){\vector(-1,0){1.3}}
  \put(2.05,2.1){\vector(1,-2){0.6}}
  \put(1.3,0.8){\vector(1,2){0.6}}

  \put(7.35,0.65){\vector(1,0){1.3}}
  \put(8.65,0.8){\vector(-1,2){0.6}}
  \put(7.95,2){\vector(-1,-2){0.6}}
\end{picture}
\end{center}

We write $\overrightarrow{abc}$ $\left(= \overrightarrow{bca} \text{ or } \overrightarrow{cab}\right )$ for the former and $\overrightarrow{acb}$ $\left(= \overrightarrow{bac} \text{ or } \overrightarrow{cba}\right )$ for the latter.
Each oriented triple induces an orientation on each of its 2-sets. Namely, if $F$ has the former orientation, we have the induced edges $\overrightarrow{ab},\overrightarrow{bc},\overrightarrow{ca}$; and if $F$ has the latter orientation, we have the induced edges $\overrightarrow{ac},\overrightarrow{cb},\overrightarrow{ba}$. A \emph{cycle} is a weighted sum (with positive weights) of triples that gives each directed edge a total weight of zero. 

For an oriented $3$-graph $G$ that contains a cycle, we are interested in the \emph{shortest cycle} in $G$, one with the smallest length. In particular, we want to know how large the girth (the length of the shortest cycle) of $G$ can be. 

The plan of the paper is as follows. We start by considering cycles in 3-tournaments. It is easy to see that if a 3-tournament contains a cycle, then its shortest cycle has length at most $\binom{n-1}{2}+1$. We do not know if the upper bound is best possible (or even asymptotically best possible), but we present a construction giving a lower bound of about $\frac{2}{3}\binom{n}{2}$. This is in complete contrast to the case of $2$-tournaments, where of course if there is a cycle, there is a cycle of length 3. Our construction is based on some embeddings of complete graphs into surfaces of high genus. This is the content of Section 2.

In Section 3 we turn our attention to general oriented 3-graphs. Here the same upper bound of $\binom{n-1}{2}+1$ applies. We show that there exists an oriented 3-graph on $n$ vertices whose shortest cycle has length $\binom{n}{2}\left(1+o(1)\right)$, which is asymptotically best possible.

For an oriented 3-graph $G$, we will usually write $V(G)$ for its vertex set and $\mathcal{E}(G)$ for its oriented triples. A triple $\{a,b,c\}$ in an oriented 3-graph is always oriented, and when its orientation is not important to us we will sometimes refer to it as $abc$.

Finally, we remark that one could view this notion of cycle as a `homological' version. Of course, as we have 3-sets but no 4-sets there is no notion of `boundary', so that there is no notion of equivalence of cycles. Our paper does not use any homological notions (but for an introduction to homology, see e.g. Armstrong~\cite{armstrong}).

\section{Cycles in $3$-tournaments}

In this section, we consider $f(n)$, the length of the shortest cycle in a 3-tournament, maximised over all 3-tournaments on $n$ vertices that contain a cycle.

It is often helpful to view cycles in matrix terms. Following \cite{linial}, the \emph{incidence matrix} of an oriented $3$-graph $G$ is an $\binom{n}{2} \times |\mathcal{E}|$ matrix $A$ whose rows and columns correspond to all 2-sets $[n]^{(2)} = \left\{i<j:i,j\in [n]\right\}$ and all oriented triples of $\mathcal{E}$ respectively. For $E=\{i<j\}$, the $(E,F)$ entry of $A$ is 
\begin{equation*}
(A)_{E,F} =
\begin{cases}
 1	&\text{ if $E\subset F$ and $F$ induces $\overrightarrow{ij}$ on $E$,}\\
 -1	&\text{ if $E\subset F$ and $F$ induces $\overrightarrow{ji}$ on $E$,}\\
 0	&\text{ otherwise.}
\end{cases}
\end{equation*}
So each column of $A$ has exactly three non-zero entries.

Given an oriented 3-graph $G=(V,\mathcal{E})$ with its incidence matrix $A$, a non-empty subset $\mathcal{C}$ of $\mathcal{E}$ is called a \emph{cycle} if there exists positive real number $\alpha_F$ for every $F\in\mathcal{C}$ such that \[\sum_{F\in\mathcal{C}} \alpha_F \mathbf{x}_F=\mathbf{0},\]
where $\mathbf{x}_F$ is the column vector of $A$ that corresponds to the oriented triple $F$. The length of the cycle $\mathcal{C}$ is the number of elements in $\mathcal{C}$. For example, the 3-tournament $\left([4], \left\{\overrightarrow{123},\overrightarrow{142}, \overrightarrow{134},\overrightarrow{243}\right\}\right)$ is itself a cycle of length four. (Note that this is called a \emph{directed $4$-set} in \cite{leader}.)

We first present an easy upper bound on $f(n)$ using standard results from linear algebra. Recall that Carath\'{e}odory's theorem says that if a point $\mathbf{x}\in \mathbb{R}^d$ lies in the convex hull of a set of points $P$, there is a subset $P'$ of $P$ consisting of at most $d+1$ points such that $\mathbf{x}$ lies in the convex hull of $P'$. 

For a 3-tournament on $n$ vertices $T$, the column vectors of its incidence matrix span a subspace of $\mathbb{R}^{\binom{n}{2}}$, and we denote this subspace by $S_T$.

\begin{lemma}\label{lemma_caratheodory}
 Let $T$ be a $3$-tournament that contains a cycle. Suppose that $S_T$ has rank $d$. Then the shortest cycle in $T$ has length at most $d+1$. 
\end{lemma}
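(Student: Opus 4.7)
The plan is to apply Carathéodory's theorem directly in the subspace $S_T$. Suppose $\mathcal{C} \subseteq \mathcal{E}(T)$ is a cycle with witnessing positive weights $\alpha_F > 0$ such that $\sum_{F \in \mathcal{C}} \alpha_F \mathbf{x}_F = \mathbf{0}$. Since this identity is invariant under scaling the weights by a positive constant, I would first rescale so that $\sum_{F \in \mathcal{C}} \alpha_F = 1$. This rewrites the identity as the statement that $\mathbf{0}$ is a convex combination of the vectors $\{\mathbf{x}_F : F \in \mathcal{C}\}$, i.e. $\mathbf{0}$ lies in their convex hull.

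Next, I would observe that all these column vectors lie in the subspace $S_T$, which by hypothesis has dimension $d$. Identifying $S_T$ with $\mathbb{R}^d$, Carathéodory's theorem (as stated just before the lemma) yields a subset $\mathcal{C}' \subseteq \mathcal{C}$ of size at most $d+1$ such that $\mathbf{0}$ still lies in the convex hull of $\{\mathbf{x}_F : F \in \mathcal{C}'\}$. This produces non-negative coefficients $\beta_F$, with $\sum_{F \in \mathcal{C}'} \beta_F = 1$ and $\sum_{F \in \mathcal{C}'} \beta_F \mathbf{x}_F = \mathbf{0}$.

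Finally, I would pass to the sub-collection $\mathcal{C}'' = \{F \in \mathcal{C}' : \beta_F > 0\}$, which is non-empty since the $\beta_F$ sum to $1$. By construction, the $\beta_F$ on $\mathcal{C}''$ are strictly positive and still witness $\sum_{F \in \mathcal{C}''} \beta_F \mathbf{x}_F = \mathbf{0}$, so $\mathcal{C}''$ is a cycle of length at most $|\mathcal{C}'| \le d+1$, as required.

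There is no real obstacle here: the only thing one has to be a little careful about is that applying Carathéodory might introduce zero coefficients, which is why the final clean-up step restricts to $\mathcal{C}''$, and that a cycle requires strictly positive weights, which is exactly what the restriction guarantees.
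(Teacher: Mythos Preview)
Your proof is correct and follows essentially the same approach as the paper: both identify $S_T$ with $\mathbb{R}^d$, observe that a cycle expresses the origin as a positive (hence convex, after rescaling) combination of column vectors, and then apply Carath\'eodory's theorem to extract a sub-cycle of length at most $d+1$. Your explicit clean-up step passing to $\mathcal{C}''$ to ensure strictly positive coefficients is a nice touch that the paper leaves implicit.
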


\begin{proof}
 $S_T$ is isomorphic to $\mathbb{R}^d$. A cycle in $T$ corresponds to a set of points (column vectors) $P$,  such that its convex hull contains the origin. So by Carath\'{e}odory's theorem, there is a subset $P'$ of $P$ consisting of at most  $d+1$ points such that the origin lies in the convex hull of $P'$, which in turn corresponds to a cycle in $T$ whose length is $|P'|\le d+1$. 
\end{proof}

Together with the fact that $S_T$ has rank at most $\binom{n}{2}$ for $T$ a 3-tournament on $n$ vertices, we can deduce that $f(n)\le \binom{n}{2} + 1$ from the above lemma. The following easy result gives a better bound for the rank of $S_T$ and hence a better upper bound of $f(n)$. 

\begin{lemma}\label{lemma_rank}
 Let $T$ be a $3$-tournament on $n$ vertices. Then $S_T$ has rank at most $\binom{n}{2}-n+1$.
\end{lemma}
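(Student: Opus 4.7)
The plan is to exhibit $n-1$ linearly independent vectors in the left null space of the incidence matrix $A$, which will force the column rank to be at most $\binom{n}{2}-(n-1)$.

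For each vertex $v \in [n]$ I would define a row vector $\delta_v \in \mathbb{R}^{\binom{n}{2}}$, indexed by $2$-sets, by setting the entry at $\{i,j\}$ with $i<j$ equal to $+1$ if $v=i$, $-1$ if $v=j$, and $0$ otherwise. The key observation is that for any oriented triple $F$ and any vertex $v$, the inner product $\delta_v^{T}\mathbf{x}_F$ computes the ``signed degree'' of $v$ inside $F$: working through the sign conventions in the definition of $A$, one sees that each edge of $F$ incident to $v$ contributes $+1$ if it is directed \emph{out of} $v$ and $-1$ if it is directed \emph{into} $v$, regardless of the relative order of $v$ and its neighbour. (The two sign flips — one from $\delta_v$ and one from $A$ — cancel.) Since $F$ is cyclically oriented, every vertex of $F$ has in-degree $1$ and out-degree $1$ within $F$, so $\delta_v^{T}\mathbf{x}_F = 0$. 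Vertices not in $F$ contribute zero trivially, so $\delta_v^{T}A = 0$ for every $v$.

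It then remains to show that these vectors span an $(n-1)$-dimensional subspace. The full collection $\delta_1,\ldots,\delta_n$ satisfies $\sum_{v}\delta_v = 0$, but the first $n-1$ of them are linearly independent: in any vanishing combination $\sum_{v=1}^{n-1} c_v \delta_v = 0$, reading off the coordinate indexed by $\{v,n\}$ forces $c_v = 0$, because only $\delta_v$ and $\delta_n$ are nonzero there. Hence the left null space of $A$ has dimension at least $n-1$, and so $\operatorname{rank}(S_T) \le \binom{n}{2} - n + 1$.

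There is no real obstacle here: the argument is a routine verification once the correct ``boundary'' functionals $\delta_v$ are written down. Conceptually these are exactly the signed vertex-incidence functionals on the edges of $K_n$, and the identity $\delta_v^{T}\mathbf{x}_F = 0$ is simply the statement that each cyclically oriented triangle is a $1$-cycle in the complete graph.
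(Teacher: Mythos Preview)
Your proof is correct and is essentially identical to the paper's: the vectors $\delta_v$ you define are precisely the paper's $\mathbf{x}_i$, you prove their linear independence by examining the same $\{v,n\}$ coordinate, and your ``in-degree equals out-degree'' interpretation is just a conceptual rephrasing of the paper's direct two-case check that each column is orthogonal to each $\mathbf{x}_i$.
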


\begin{proof}
Let $A$ be the incidence matrix of $T$. We show that there are $n-1$ linearly independent vectors of $\mathbb{R}^{\binom{n}{2}}$ such that each one of them is orthogonal to every colomn vector of $A$. And the conclusion of the lemma follows easily from the rank-nullity theorem.

For $1\le i\le n-1$, let $\mathbf{x}_i$ be vectors of length $\binom{n}{2}$ indexed by $[n]^{(2)}$ with the following $jk$-th entries ($j<k$).
\[(\mathbf{x}_i)_{jk}\begin{cases}
 1 &\text{ if }i=j<k,\\
 -1 &\text{ if }j<k=i,\\
 0 &\text{ otherwise.}
\end{cases}\]
It is easy to see that $\mathbf{x}_1, \mathbf{x}_2, \ldots, \mathbf{x}_{n-1}$ are linearly independent as $\mathbf{x}_i$ is the only vector with non-zero $in$-th entry among them.

Given a column vector $\mathbf{v}$ of $A$, it corresponds to the orientation of a 3-set in $T$, say the 3-set $\{a<b<c\}$. The only nonzero entries of $\mathbf{v}$ are $\mathbf{v}_{ab}, \mathbf{v}_{bc}$, and $\mathbf{v}_{ac}$. Either $\mathbf{v}_{ab} = 1, \mathbf{v}_{bc} = 1, \mathbf{v}_{ac} = -1$ or $\mathbf{v}_{ab}=-1, \mathbf{v}_{bc} = -1, \mathbf{v}_{ac}=1$. In both cases, it is straightforward to check that $\mathbf{v}$ is orthogonal to $\mathbf{x}_i$ for every $i$. This completes the proof of the lemma.
\end{proof}

Combining Lemma~\ref{lemma_caratheodory} and Lemma~\ref{lemma_rank}, we have the following upper bound of $f(n)$.

\begin{corollary}\label{cycle_tournament_upper}
 The shortest cycle in a $3$-tournament on $n$ vertices that contains a cycle has length at most $\binom{n}{2}-n+2$. That is, $f(n)\le \binom{n}{2}-n+2=\binom{n-1}{2}+1$.\qed
\end{corollary}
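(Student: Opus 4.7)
The statement is an immediate combination of the two preceding lemmas, so the proof proposal is essentially ``apply them in sequence''. The plan is as follows.

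First I would invoke Lemma~\ref{lemma_rank} to bound the rank $d$ of $S_T$. Since $T$ is a 3-tournament on $n$ vertices, that lemma gives $d \le \binom{n}{2}-n+1$. This is the only place where the structure of $T$ (specifically, that the column vectors of its incidence matrix are all orthogonal to the $n-1$ explicit vectors $\mathbf{x}_1,\dots,\mathbf{x}_{n-1}$) comes in.

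Next I would feed this bound into Lemma~\ref{lemma_caratheodory}. Since $T$ contains a cycle by hypothesis, that lemma guarantees a cycle of length at most $d+1$. Combining the two inequalities yields
\[
 f(n) \le d + 1 \le \binom{n}{2} - n + 1 + 1 = \binom{n}{2} - n + 2.
\]
A short algebraic identity, $\binom{n}{2} - n + 2 = \binom{n-1}{2} + 1$, then rewrites the bound in the form stated. There is no genuine obstacle here; the only thing to be slightly careful about is to note explicitly that the shortest cycle in $T$ has length at most $d+1$ whenever any cycle exists, which is exactly the content of Lemma~\ref{lemma_caratheodory}, so nothing further is needed.
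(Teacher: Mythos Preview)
Your proposal is correct and matches the paper's approach exactly: the corollary is stated with a \qed and no proof, preceded by the sentence ``Combining Lemma~\ref{lemma_caratheodory} and Lemma~\ref{lemma_rank}, we have the following upper bound of $f(n)$.'' Your write-up simply makes that combination explicit.
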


We remark that the bound in Lemma~\ref{lemma_rank} is asymptotically best possible. (See the remark at the end of Section 3.)

We now turn our attention to the lower bound of $f(n)$. We will give a construction of a 3-tournament on $n$ vertices whose shortest cycle has length exactly $\frac{1}{3}n(n-1)$, for infinitely many $n$.
Our proof is based on some embeddings of the complete graphs in high genus surfaces. 

We will also make use of the following lemma by Linial and Morgenstern~\cite{linial}, which is particularly helpful in our construction. For the sake of completeness, we will include the proof here. 

\begin{lemma}[\cite{linial}] \label{liniallemma}
 Let $C$ be an oriented $3$-graph with the following properties.
 \begin{enumerate}[(i)]
  \item The only cycle in $C$ consists of all of its triples.
  \item No additional cycle can be created by addition of any single oriented $3$-set.
 \end{enumerate}
 Then we can orient the remaining $3$-sets (namely, the $3$-sets from $V(C)^{(3)}\setminus \mathcal{E}(C)$) to obtain a tournament $T$ such that $C$ remains as the only cycle in $T$.
\end{lemma}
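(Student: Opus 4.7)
The plan is to extend $C$ to a tournament by adding the missing oriented 3-sets one at a time, each time choosing an orientation that introduces no new cycle. Writing the triples added so far as $F_1,\dots,F_k$, I will show by induction on $k$ that the invariant ``the only cycle in $C\cup\{F_1,\dots,F_k\}$ is the original cycle $\mathcal{E}(C)$'' can always be maintained. This suffices, since once every missing 3-set has been added we obtain a tournament $T$ in which $C$'s cycle is still the only cycle.

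For the inductive step, fix the next underlying 3-set and let $\mathbf{v}_+$ and $\mathbf{v}_-=-\mathbf{v}_+$ be the two column vectors corresponding to its two possible orientations. Suppose for contradiction that both choices create a new cycle; then there exist positive weights with
\[\alpha\mathbf{v}_++\sum_{F\in S_+}\beta_F\mathbf{v}_F=\mathbf{0} \quad\text{and}\quad \gamma\mathbf{v}_-+\sum_{F\in S_-}\delta_F\mathbf{v}_F=\mathbf{0},\]
where $S_+,S_-\subseteq\mathcal{E}(C)\cup\{F_1,\dots,F_k\}$. Taking $\gamma$ times the first equation plus $\alpha$ times the second, the $\mathbf{v}_\pm$ terms cancel, leaving a positive linear combination of the triples in $S_+\cup S_-$ equal to $\mathbf{0}$. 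The collection $S_+\cup S_-$ must be nonempty, since otherwise $\mathbf{v}_+=\mathbf{0}$, which is impossible as $\mathbf{v}_+$ has three nonzero entries. Hence $S_+\cup S_-$ is itself a cycle in $C\cup\{F_1,\dots,F_k\}$; by the inductive hypothesis this cycle must coincide with $\mathcal{E}(C)$, and in particular $S_+\subseteq\mathcal{E}(C)$.

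But then the first equation already exhibits a cycle in $C\cup\{F_{k+1}\}$ (where $F_{k+1}$ denotes the 3-set oriented according to $\mathbf{v}_+$), and this cycle is new because it contains $F_{k+1}\notin\mathcal{E}(C)$. This contradicts hypothesis (ii). So at least one of the two orientations of the next 3-set is safe, and the greedy construction goes through.

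The main obstacle is the cancellation step and its correct interpretation: one must be careful that the combined relation lives in the \emph{current} 3-graph $C\cup\{F_1,\dots,F_k\}$ (so that the induction hypothesis applies), and then that the consequence $S_+\subseteq\mathcal{E}(C)$ lets one invoke hypothesis (ii) on $C$ itself. In short, the full-support property of the unique cycle of $C$ is what drives the inductive cancellation, while (ii) is reserved for the final contradiction.
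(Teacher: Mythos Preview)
Your proof is correct and follows essentially the same route as the paper: add the missing $3$-sets one at a time, assume both orientations of the next one create a new cycle, cancel to obtain a cycle not involving the new triple, identify it (via the inductive hypothesis) with $\mathcal{E}(C)$, and then invoke hypothesis~(ii) for the contradiction. Your write-up is somewhat more explicit about the inductive framework and the bookkeeping around $S_+\cup S_-$, but the argument is the same.
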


\begin{proof}
 We will show that such $T$ can be constructed by orienting the remaining $3$-sets one by one.
 Let $F'$ be a 3-set which was not oriented yet. Suppose that both orientations of $F'$ give rise to new cycles. That is, $\sum \alpha_F \mathbf{x}_F  + \mathbf{x}_{F'}=\mathbf{0}$ and $\sum \alpha_F' \mathbf{x}_F  - \mathbf{x}_{F'}=\mathbf{0}$, where $\mathbf{x}_{F'}$ corresponds to $F'$ oriented one of the two ways. Then $\sum (\alpha_F +\alpha_F')\mathbf{x}_F=\mathbf{0}$ is another cycle, which does not involve $F'$. Hence this must be the only cycle $C$, implying the new cycles created use only the 3-sets from $C$ and $F'$, contradicting the properties of $C$ in the lemma. 
\end{proof}

We are now ready to construct 3-tournaments whose shortest cycle has length $\frac{2}{3}\binom{n}{2}$.

\begin{thm}\label{cycle_tournament_lower}
 Let $n\ge 4$ and $n \equiv 0,3,4 \text{or }7\pmod{12}$. Then there is a $3$-tournament on $n$ vertices whose shortest cycle has length $\frac{1}{3}n(n-1)$.
\end{thm}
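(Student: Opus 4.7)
The plan is to construct a cycle $C$ of length $\frac{1}{3}n(n-1)$ with a very restrictive structure, and then apply Lemma~\ref{liniallemma} to complete it to a 3-tournament in which $C$ remains the only cycle (hence is the shortest cycle). The key input is a triangular 2-cell embedding of $K_n$ into an orientable surface $\Sigma$. By the Ringel--Youngs Map Color Theorem, the congruence $n\equiv 0,3,4,7\pmod{12}$ is precisely the condition for such an embedding to exist (with genus $(n-3)(n-4)/12$), and by Euler's formula it has exactly $\frac{1}{3}n(n-1)$ triangular faces. I would orient $\Sigma$, which induces a cyclic orientation on each face, and take $\mathcal{E}(C)$ to be this collection of oriented triples.

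The verification then has three pieces. First, $C$ is itself a cycle with weight $1$ on every face: each 2-set lies in exactly two faces, and orientability of $\Sigma$ forces these to induce opposite orientations on it, so $\sum_{F}\mathbf{x}_F=\mathbf{0}$. Second (condition (i) of Lemma~\ref{liniallemma}), $C$ is the only cycle inside itself; this is exactly the statement that $H_2(\Sigma;\mathbb{R})\cong\mathbb{R}$, i.e.\ the only linear relations among the face vectors are scalar multiples of $\sum\mathbf{x}_F=\mathbf{0}$, which forces any positive-weight vanishing combination to use every face with the same weight. Third (condition (ii)), I need to rule out the creation of a new cycle by adding any oriented 3-set $\sigma(F')$ with $F'\notin\mathcal{E}(C)$; a relation $\mathbf{x}_{\sigma(F')}+\sum\beta_F\mathbf{x}_F=\mathbf{0}$ with $\beta_F\ge 0$ would say that the triangle $F'$ bounds a non-negative 2-chain on $\Sigma$, equivalently that $F'$, viewed as a simple closed curve on $\Sigma$, separates it and that $\sigma$ matches the induced boundary orientation.

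The main obstacle is therefore to arrange that no non-face triangle separates $\Sigma$ --- equivalently, that the cubic dual graph $G^\ast$ of the embedding is 3-edge-connected, so that its only 3-edge cuts are the vertex-stars (the faces themselves). The case $n=4$ is vacuous, since on the sphere every triangle of $K_4$ is a face. The case $n=7$ on the torus can be verified directly from Heawood's classical triangulation. For general admissible $n$, one inspects the current-graph constructions underlying Ringel's proof of the Map Color Theorem and checks that the rotation systems they produce yield triangulations with this 3-edge-connectedness property. Once (i) and (ii) are in hand, Lemma~\ref{liniallemma} extends $C$ to a 3-tournament $T$ on $[n]$ in which $C$ remains the unique cycle, so the shortest cycle in $T$ has length $\frac{1}{3}n(n-1)$, as required.
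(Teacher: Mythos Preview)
Your overall strategy coincides with the paper's: take a triangular embedding of $K_n$ in a closed orientable surface, orient all faces coherently to obtain $C$, verify conditions (i) and (ii) of Lemma~\ref{liniallemma}, then extend to a tournament. Your argument for (i) via $H_2(\Sigma;\mathbb{R})\cong\mathbb{R}$ is correct and is just the homological rephrasing of the paper's link-chasing proof.

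The gap is in your treatment of (ii). Your reduction of (ii) to the statement ``no non-face triangle of $K_n$ separates $\Sigma$'' (equivalently, the only $3$-edge cuts in the cubic dual are the trivial vertex stars) is correct, but you then regard this as an additional hypothesis to be extracted case-by-case from Ringel's current-graph constructions, and you do not actually carry out that inspection. As written, this is not a proof. More importantly, no such inspection is needed: the non-separation property holds automatically for \emph{every} triangular embedding of $K_n$, and the paper proves (ii) directly by the following elementary argument. Suppose $\rho=\{a,b,c\}$ is a non-face $3$-set and $\sum_{F\in C'}\beta_F\mathbf{x}_F+\mathbf{x}_\rho=\mathbf{0}$ with all $\beta_F>0$. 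Pick a vertex $v$ of $C'$ with $v\notin\rho$ (one exists since $n\ge 4$). Every edge $vw$ lies in exactly two faces of $C$ and in no triple of $C'\cup\{\rho\}$ other than those, so going around the link of $v$ forces all faces at $v$ into $C'$ with a common coefficient $\beta$. Since $\{a,b,c\}$ is not a face, every face of $C$ contains some vertex outside $\{a,b,c\}$, and any two such vertices share a face; hence every face of $C$ carries the same coefficient $\beta$. But then $\beta\sum_F\mathbf{x}_F+\mathbf{x}_\rho=\mathbf{x}_\rho\neq\mathbf{0}$, a contradiction. In your dual-graph language this is precisely the statement that deleting the three dual edges $(ab)^*,(bc)^*,(ca)^*$ leaves $G^*$ connected: one walks around the link of any vertex outside $\rho$ without ever crossing a deleted dual edge, and such links already cover all faces.

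So your proposal is on the right track but incomplete; replacing the appeal to Ringel's rotation systems by this short link argument closes the gap and recovers exactly the paper's proof.
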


\begin{proof}
It is well known that a complete graph $K_n$ can be embedded in an orientable surface of sufficiently large genus (see, for example, \cite{ringel}). In the cases where $n \equiv 0,3,4 \text{or }7 \pmod{12}$, the genus may be chosen such that the embeddings are triangulations. Given any such triangulation, we can induce an oriented 3-graph $C$, which is a cycle of length $\frac{1}{3}n(n-1)$, by orienting every face (a 3-set) on the surface in the same orientation: all oriented clockwise or all oriented anticlockwise, viewing from outside the surface. 

 
 We first claim that $C$ does not contain a cycle of a shorter length. Suppose $C'\subset C$ is a cycle. Pick any vertex $v$ in $C'$ and name the remaining vertices $v_1,v_2,\ldots,v_{n-1}$ such that the oriented 3-sets containing $v$ in $C$ are $vv_iv_{i+1}, i\in [n-1]$. (The subscripts are taken mod $n-1$.) By the definition of $C$, it is not too hard to see that if any of these 3-sets is in $C'$, all of them must be in $C'$. Indeed, $vv_{i-1}v_{i}$ and $vv_iv_{i+1}$ are the only two oriented 3-sets in $C$ containing the the 2-set $vv_i$.
 This implies that $C'$ contains all of the $n$ vertices. Repeating the above arguments with $v$ replaced by each of $v_i$ shows that all 3-sets of $C$ are in $C'$, proving the claim. 
 
 Next, we claim that $C$ has the property that no additional cycle can be created by the addition of any single oriented 3-set. 
 Suppose $C' \cup \rho$ is a cycle, where $C'\subset C$ and $\rho$ is an oriented 3-set not in $C$. That is, there exist positive coefficients $\alpha_F$ such that $\left(\sum_{F\in C'} \alpha_F \mathbf{x}_F\right)  + \mathbf{x}_\rho=\mathbf{0}$. As $n\ge 4$, there is a vertex in $C'$ but not in $\rho$. Pick any such vertex $v$, every 3-set containing $v$ must also be in the new cycle (with the same coefficient). Since $\rho \notin C$, all 3-sets of $C$ are in $C'$ and $\alpha_F$ is constant. This immediately implies that $C' \cup \rho = C\cup \rho$ is not a cycle.
 
 Now $C$ satisfies the properties in Lemma~\ref{liniallemma}, and so there is a tournament such that $C$ remains as the only cycle (hence the shortest cycle). This completes the proof of the theorem.
\end{proof}

Combining Corollary~\ref{cycle_tournament_upper} and Theorem~\ref{cycle_tournament_lower} we have $\frac{1}{3}n(n-1)\le f(n)\le \binom{n-1}{2}+1$ for $n \equiv 0,3,4 \text{or }7\pmod{12}$. Observe that each 2-set is contained in exactly two 3-sets (each giving a different orientation to the 2-set) in the shortest cycle of the tournament in Theorem~\ref{cycle_tournament_lower}, and for the exact value $f(n)$ to be closer to the upper bound, most 2-sets would have to be in three 3-sets of a shortest cycle, which we believe is unlikely. In fact, we believe that our construction is best possible.  

\begin{conjecture}
 For $n\ge 4$ and $n \equiv 0,3,4 \text{or }7\pmod{12}$, we have $f(n)=\frac{1}{3}n(n-1)$.
\end{conjecture}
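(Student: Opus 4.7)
The conjecture amounts to the matching upper bound $f(n)\le\frac{1}{3}n(n-1)$ for $n\equiv 0,3,4,7\pmod{12}$. The plan is to show that any shortest cycle $\mathcal{C}$ in a $3$-tournament enjoys the structural property that each $2$-set lies in at most two of its triples. Granted this, the balance condition at each used $2$-set $E$ requires at least one triple of each sign containing $E$, so each used $2$-set appears in \emph{exactly} two triples of $\mathcal{C}$. Double-counting triple--edge incidences then gives $3|\mathcal{C}|=2e$, where $e$ is the number of $2$-sets used, and since $e\le\binom{n}{2}$ we conclude $|\mathcal{C}|\le\frac{n(n-1)}{3}$ as required.

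The plan divides naturally into three steps. First, reduce to inclusion-minimal cycles: any shortest cycle is minimal, so the submatrix $A_{\mathcal{C}}$ of the incidence matrix has a $1$-dimensional kernel, spanned by a positive weight vector $(\alpha_F)_{F\in\mathcal{C}}$ that is unique up to scaling. Second, note the topological picture: a minimal positive cycle in which each $2$-set lies in exactly two triples is precisely a triangulation of a closed $2$-pseudo-surface on the used vertex set, and the cycles constructed in Theorem~\ref{cycle_tournament_lower} are exactly the instances saturating $e=\binom{n}{2}$. Thus the conjecture is recast as the assertion that a shortest cycle is always a pseudo-surface.

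The main obstacle is this structural assertion. Suppose, for contradiction, that some $2$-set $\{u,v\}$ of a minimal cycle $\mathcal{C}$ is contained in three triples $F_i=\{u,v,w_i\}$ ($i=1,2,3$); then one would like to produce a strictly shorter cycle by locally rerouting around $\{u,v\}$, perhaps by splicing in the $T$-oriented triples $\{u,w_i,w_j\}$, $\{v,w_i,w_j\}$, or $\{w_1,w_2,w_3\}$. The genuine difficulty is that the orientations of these auxiliary triples are chosen by $T$ and are not under our control, so no single local move will work for every $T$. I expect the resolution will require combining the uniqueness (up to scale) of the positive dependence $(\alpha_F)$ with a careful case analysis of the orientations and balance conditions around the offending $2$-set, together with a non-trivial use of the minimality of $|\mathcal{C}|$ across the entire tournament; it may well require a genuinely new idea beyond the linear-algebraic and topological tools developed in the previous sections.
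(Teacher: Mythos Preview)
This statement is a \emph{conjecture} in the paper: the authors do not prove it, and indeed explicitly present it as open. So there is no paper proof to compare against. What you have written is not a proof either, and you yourself acknowledge this in the final paragraph.

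More specifically, your proposal reduces the conjecture to the structural claim that in any shortest cycle of a $3$-tournament, every $2$-set lies in at most two triples. But this reduction is essentially already in the paper: the sentence immediately preceding the conjecture observes that the constructed cycle has each $2$-set in exactly two triples, and that for $f(n)$ to exceed $\frac{1}{3}n(n-1)$ ``most $2$-sets would have to be in three $3$-sets of a shortest cycle, which we believe is unlikely.'' Your double-counting argument $3|\mathcal{C}|=2e\le 2\binom{n}{2}$ is the trivial direction of this observation. The entire difficulty of the conjecture \emph{is} the structural claim you identify as the ``main obstacle,'' and for that you offer only a sketch of why a naive local-replacement argument fails, followed by the admission that ``it may well require a genuinely new idea.'' That is an honest assessment, but it means your proposal is a restatement of the open problem together with some commentary on why it is hard, not a proof or even a viable proof strategy.
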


For the case when $n \not\equiv 0,3,4 \text{or }7\pmod{12}$, consider the cycle $C$ (an oriented 3-graph spanning $m$ vertices) induced by the triangulation of $K_m$ as before, where $m$ is the largest integer smaller than $n$ such that $m \equiv 0,3,4 \text{or }7\pmod{12}$. Then by almost identical arguments in the proof of Theorem~\ref{cycle_tournament_lower}, there is a tournament on $n$ vertices such that $C$ is the shortest cycle. 
This, together with Corollary~\ref{cycle_tournament_upper} and Theorem~\ref{cycle_tournament_lower}, we can bound $f(n)$ for all $n\ge 4$.

\begin{corollary}
 For $n\ge 4$, we have $\frac{1}{3}m(m-1)\le f(n)\le \binom{n-1}{2}+1$, where $m$ is the largest integer smaller or equal to $n$ such that $m \equiv 0,3,4 \text{or }7\pmod{12}$.
\end{corollary}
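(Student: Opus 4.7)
The plan is to use the construction of Theorem~\ref{cycle_tournament_lower} directly: take the oriented 3-graph $C$ on $m$ vertices produced by a triangular embedding of $K_m$ (which exists because $m \equiv 0,3,4,7 \pmod{12}$), and extend it to a 3-tournament on the larger vertex set $[n]$ via Lemma~\ref{liniallemma}. The upper bound is immediate from Corollary~\ref{cycle_tournament_upper}, since that bound holds for all $n$. So the work is entirely in producing a 3-tournament on $n$ vertices whose shortest cycle has length exactly $\frac{1}{3}m(m-1)$.

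The key step is to verify that $C$, viewed now as an oriented 3-graph on the vertex set $[n]$ with $n-m$ isolated vertices, still satisfies the two hypotheses of Lemma~\ref{liniallemma}. Hypothesis (i) — that the only cycle among the triples of $C$ is $C$ itself — is literally the first claim proved inside Theorem~\ref{cycle_tournament_lower}, and it is unaffected by enlarging the ambient vertex set. Hypothesis (ii) — that no single additional oriented 3-set $\rho \subset [n]$ creates a new cycle — splits into two cases. If $\rho$ has all three vertices in $[m]$, this is exactly the second claim proved in Theorem~\ref{cycle_tournament_lower}. If $\rho$ has at least one vertex $w \in [n] \setminus [m]$, then $\rho$ contains a 2-set $\{w,x\}$ which is not contained in any triple of $C$; so in the incidence matrix, the row indexed by $\{w,x\}$ has a nonzero entry only in the column of $\rho$, and no positive combination of columns in $C \cup \{\rho\}$ giving the weight $\rho$ a positive value can zero out this row.

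With both hypotheses verified, Lemma~\ref{liniallemma} provides an orientation of all remaining 3-sets in $[n]^{(3)} \setminus \mathcal{E}(C)$ yielding a 3-tournament $T$ on $n$ vertices in which $C$ remains the only cycle. In particular, $C$ is the shortest cycle in $T$, and it has length $\frac{1}{3}m(m-1)$, giving $f(n) \geq \frac{1}{3}m(m-1)$.

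There is no real obstacle here beyond being careful about the case analysis in hypothesis (ii) when $\rho$ uses vertices outside $[m]$; the essential content has already been done in Theorem~\ref{cycle_tournament_lower} and Lemma~\ref{liniallemma}. Together with Corollary~\ref{cycle_tournament_upper} this yields the claimed two-sided bound.
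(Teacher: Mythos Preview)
Your proposal is correct and follows essentially the same approach as the paper: the paper simply says ``by almost identical arguments in the proof of Theorem~\ref{cycle_tournament_lower}, there is a tournament on $n$ vertices such that $C$ is the shortest cycle'', and you have spelled out precisely what that entails, including the one genuinely new detail (the easy case where the added triple $\rho$ meets $[n]\setminus[m]$).
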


Again, we believe that a construction attaining the upper bound is unlikely and that our construction is asymptotically best possible. 

\begin{conjecture}
$f(n)=\left(\frac{1}{3}+o(1)\right)n^2$ for all $n\ge 4$.
\end{conjecture}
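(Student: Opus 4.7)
The lower bound $f(n) \ge (\tfrac{1}{3}+o(1))n^2$ is already at hand, since the corollary preceding the conjecture gives $f(n) \ge \tfrac{1}{3}m(m-1)$ for $m$ the largest integer at most $n$ with $m \equiv 0,3,4,7 \pmod{12}$, and such an $m$ exists within a constant distance of $n$, forcing $\tfrac{1}{3}m(m-1) = (\tfrac{1}{3}-o(1))n^2$. The real content is the matching upper bound, which requires strengthening the Carath\'eodory bound $\binom{n-1}{2}+1 \sim \tfrac{1}{2}n^2$ of Corollary~\ref{cycle_tournament_upper} down by a factor of $3/2$.

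My plan is the approach hinted at in the remark after Theorem~\ref{cycle_tournament_lower}. For any cycle $\mathcal{C}$, let $U(\mathcal{C})$ denote the set of $2$-sets contained in at least one triple of $\mathcal{C}$ and, for $e\in U(\mathcal{C})$, let $d_{\mathcal{C}}(e)$ be the number of triples of $\mathcal{C}$ through $e$. Each $e\in U(\mathcal{C})$ must appear in $\mathcal{C}$ with both induced orientations, so $d_{\mathcal{C}}(e) \ge 2$, and counting (triple, $2$-set) incidences in two ways gives
\[
  3|\mathcal{C}| \;=\; \sum_{e\in U(\mathcal{C})} d_{\mathcal{C}}(e).
\]
If one could always locate a cycle $\mathcal{C}$ satisfying $d_{\mathcal{C}}(e) = 2$ for every $e \in U(\mathcal{C})$, then $|\mathcal{C}| = \tfrac{2}{3}|U(\mathcal{C})| \le \tfrac{1}{3}n(n-1)$, which is exactly the target.

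The natural route is a local-reduction argument. Pass to a minimal cycle $\mathcal{C}$, so that $\ker A_{\mathcal{C}}$ is one-dimensional and spanned by a strictly positive weight vector; suppose some $2$-set $e$ has $d_{\mathcal{C}}(e) \ge 3$. The three or more columns of $A_{\mathcal{C}}$ through $e$ are tied by only a single linear equation in row $e$, so heuristically one has an extra degree of freedom in their weights. Pushing along it while maintaining the balance on every other row should either drive some weight to zero (yielding a strictly shorter cycle) or expose a second nonzero kernel vector (contradicting minimality, and allowing $\mathcal{C}$ to be split as a convex combination of two shorter positive cycles, at least one of which improves on $\mathcal{C}$). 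Iterating, one arrives either at a cycle of length at most $\tfrac{1}{3}n(n-1)$ or at one whose support satisfies $d_{\mathcal{C}}(e) = 2$ throughout, and in either case the desired bound follows.

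The main obstacle, and surely the reason the conjecture stays open, is that this ``extra degree of freedom'' is only a local count: perturbing the weights of three triples through $e$ forces compensating changes at the six other rows they touch, and those changes can propagate in a self-supporting way without any weight ever reaching zero. There is no evident global reason ruling out a tightly-coupled minimal cycle in which many $2$-sets lie on three or more triples. Turning the heuristic into a theorem appears to require either a structural classification of minimal cycles in $3$-tournaments beyond the triangulation examples of Theorem~\ref{cycle_tournament_lower}, or a fundamentally different method such as a probabilistic argument that averages overcovering away by passing to a carefully chosen random sub-tournament.
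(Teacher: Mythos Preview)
The statement you are attempting is a \emph{conjecture}; the paper does not prove it and explicitly leaves it open. Your write-up is candid about this: you correctly extract the lower bound from the preceding corollary, and you flag your upper-bound heuristic as incomplete in the final paragraph. So there is no discrepancy with the paper --- neither you nor the authors have a proof.

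That said, there is a concrete obstruction to your local-reduction approach that is worth making explicit, because it is stronger than the ``propagation'' worry you raise. Your perturbation argument operates entirely inside a minimal cycle $\mathcal{C}$: it uses only the incidence matrix $A_{\mathcal{C}}$ and never touches a triple of the ambient tournament lying outside $\mathcal{C}$. If the argument worked, it would therefore show that \emph{every} single cycle on $n$ vertices has length at most $\tfrac{1}{3}n(n-1)$. But Section~3 of the very same paper constructs single cycles of length $(1-o(1))\binom{n}{2}$, in which most $2$-sets have degree close to $3$. Those examples happen to fail condition~(ii) of Lemma~\ref{liniallemma} and so cannot be completed to tournaments retaining the same unique cycle --- but your perturbation scheme cannot detect that distinction, and would apply to them verbatim and conclude something false. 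Hence any proof of the conjectured upper bound must genuinely exploit the remaining $\binom{n}{3}-|\mathcal{C}|$ oriented triples of the tournament, not just the internal linear algebra of $\mathcal{C}$ itself.
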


\section{Cycles in oriented 3-graphs}

Suppose $G$ is an oriented $3$-graph on $n$ vertices and $G$ contains a cycle. Using the exact same arguments as those used to derive Corollary~\ref{cycle_tournament_upper}, we know that the shortest cycle in $G$ has length at most $\binom{n-1}{2}+1$.

Our main aim in this section is to show that there exists an oriented 3-graph $G$ on $n$ vertices such that the only cycle in $G$ consists of all the triples of $G$, and has length $\binom{n}{2}(1-o(1))$, attaining the upper bound asymptotically. 
Our construction could be viewed as an attempt to ``add as many projective planes as possible to a small starting configuration''.

By a \emph{single cycle}, we shall mean an oriented 3-graph $G$ where the only cycle in $G$ consists of all its triples. 
(Note that it follows from this that all cycles in $G$ are the same, up to multiplication by positive reals.)
The basic idea is to start from a base single cycle $P$, then delete a triple from it, and attach $P$ to another single cycle in which a triple is removed. This will result another single cycle, which we can again attach $P$ onto.  

To give a better insight to how our final construction is obtained, we will first start with a simple and symmetric base single cycle, the projective plane. (We assume no knowledge of the projective plane; however, for background on surfaces and the projective plane, see e.g. Armstrong~\cite{armstrong} or Hatcher~\cite{hatcher}.) This will end up giving a single cycle $G$ of length $(\frac{2}{3}-o(1))\binom{n}{2}$.
Then we will see how to modify this construction to give girth of $(1-o(1))\binom{n}{2}$.

For convenience, we will refer a set of triples $\mathcal{F}$ as a \emph{star system} if there is a vertex $a$ such that $F\cap F' = \{a\}$ for every pair of distinct $F,F'$ in $\mathcal{F}$. 

\subsection{Attaching the projective plane}

Consider the standard 6-point triangulation of the projective plane in Fig. 1, where each of the 10 faces (triples) is oriented clockwise. 
Adding the triple $\{x,y,z\}$ with anticlockwise orientation $\overrightarrow{xzy}$ results in a single cycle (the triple $xyz$ has coefficient 2, while the other triples each has coefficient 1). Now delete the triple $\overrightarrow{acb}$ and we will denote this oriented 3-graph on six vertices by $P$ for the rest of the paper.
That is, $V(P)=\{x,y,z,a,b,c\}$ and $\mathcal{E}(P)=\{\overrightarrow{xya},\overrightarrow{ayz},\overrightarrow{azc},\overrightarrow{czx},\overrightarrow{cxy},\overrightarrow{cyb},\overrightarrow{byz},\overrightarrow{bzx},\overrightarrow{bxa},\overrightarrow{xzy}\}$.

\begin{center}
\begin{tikzpicture}
 \draw[thick] (0,1.1) node[anchor = north]{a} -- (4/3,-0.9) -- (-4/3,-0.9) -- cycle; 
 \node at (-1,-0.67) {$b$};
 \node at (1,-0.67) {$c$};
 \draw[thick] (0,0) circle (3cm);
 
 \draw[thick] (0,3) node[anchor = south]{$y$} -- (0,1.1);
 \draw[thick] (3,0) arc (0:150:3cm) node[anchor = south east]{$x$} -- (0,1.1);
 \draw[thick] (3,0) arc (0:150:3cm) -- (-4/3,-0.9);
 \draw[thick] (3,0) arc (0:210:3cm) node[anchor = north east]{$z$} -- (-4/3,-0.9);
 \draw[thick] (4/3,-0.9) -- (0,-3) node[anchor = north]{$y$} -- (-4/3,-0.9); 
 \draw[thick] (3,0) arc (0:210:3cm) -- (-4/3,-0.9);
 \draw[thick] (3,0) arc (0:330:3cm) node[anchor = north west]{$x$}-- (4/3,-0.9);
 \draw[thick] (3,0) arc (0:30:3cm) node[anchor = south west]{$z$} -- (4/3,-0.9);
 \draw[thick] (3,0) arc (0:30:3cm) -- (0,1.1);
 \node at (0,-4) {\textbf{Fig.1 }  The standard 6-point triangulation of the real projective plane};
\end{tikzpicture}
\end{center}

Given an oriented 3-graph $G$ and an oriented triple $F=\overrightarrow{ijk}$ in $G$, we can form a new oriented 3-graph by attaching $P$ on $G$ via $F$. More precisely, we say $G'$ is a \emph{$(P,F)$-attachment on $G$} (or simply \emph{$P$-attachment on $G$} when $F$ is understood) where $G'$ has vertex set $V(G')=V(G)\cup V(P)$ with $i,j,k$ identified with $a,b,c$ respectively, and set of triples $\mathcal{E}(G') =\mathcal{E}(G)\cup \mathcal{E}(P) \setminus \left\{\overrightarrow{ijk}\right\}$. 
So if $G$ is an oriented 3-graph on $k$ vertices and has $l$ triples, then the $P$-attachment on $G$ is an oriented 3-graph on $k+3$ vertices and has $l+9$ triples.
It is straightforward to check that if $G$ is a single cycle, then $G'$ is also a single cycle. (Or see the proof of Lemma~\ref{p-attachment}.)

We can also attach $P$ on an oriented 3-graph via a set of oriented triples, namely, a star system, as follows.
Let $G$ be an oriented 3-graph on $k$ vertices and $\mathcal{F}=\left\{F_i=\overrightarrow{ab_ic_i}:i=1,2,\ldots,d\right\}$ be a star system in $G$. We will attach $P$ on $G$ via $F_i$ one by one. Set $G_1$ to be the $(P,F_1)$-attachment on $G$, where $G_1$ has vertex set $V(G)\cup \{x_1=x,y_1=y,z_1=z\}$. 
Now, suppose $G_{i-1}$ is constructed, let $G_i$ be the \emph{modified $(P,F_i)$-attachment of $G_{i-1}$}: identify the three new vertices of $(P,F_i)$-attachment of $G_{i-1}$ with $\{x,y,z\}$ and delete any repeated triples. That is, if $x_i,y_i,z_i$ are the three new vertices of $(P,F_i)$-attachment of $G_{i-1}$, we identify $x_i$ with $x$, $y_i$ with $y$ and $z_i$ with $z$. 
(Note that at each stage, we always attach $P$ with the preserved orientations, for example, the triples $\{x_i,y_i,z_i\}$ has orientation $\overrightarrow{x_iz_iy_i}$. So $G_i$ is well defined for all $i$.) And finally, we say $G'=G_d$ is the \emph{$(P,\mathcal{F})$-attachment on $G$} 
(or simply \emph{$P$-attachment on $G$} when $\mathcal{F}$ is understood) on $k+3$ vertices. 

In other words, for $i\ge 2$, $G_i$ is obtained from $G_{i-1}$ by deleting the triple $F_i=\overrightarrow{ab_ic_i}$ and adding the set of triples $\left\{\overrightarrow{azc_i},\overrightarrow{c_izx},\overrightarrow{c_ixy},\overrightarrow{b_ic_iy},\overrightarrow{b_iyz},\overrightarrow{b_izx},\overrightarrow{ab_ix}\right\}$.

It is not too hard to see that if $G$ is a cycle, then the $P$-attachment on $G$ is also a cycle. In fact, if $G$ is a single cycle with a star system $\mathcal{F}$ of $l$ triples, the $(P,\mathcal{F})$-attachment on $G$ is also a single cycle.

\begin{lemma}\label{p-attachment}
 Let $G$ be a single cycle (an oriented $3$-graph) on $k$ vertices of length $l$. Suppose $\mathcal{F}=\left\{F_i = \overrightarrow{ab_ic_i}: i=1,2,\ldots, d, b_i\ne c_j \text{ for all }i,j\right\}$ is a star system in $G$. Then the $(P,\mathcal{F}$)-attachment on $G$, $G'$, is a single cycle on $k+3$ vertices of length $l+6d+3$. Furthermore, $G'$ contains a star system of size $d+1$.
 \end{lemma}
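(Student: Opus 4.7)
The plan is to proceed by induction on the attachment step $i$, showing that each $G_i$ is a single cycle and exhibiting explicit positive cycle coefficients. The crucial tool is that the $6$-point projective plane triangulation together with $\overrightarrow{xzy}$ (with coefficient $2$) forms a single cycle, giving, after identifying $a,b,c$ with the vertices of $F_i$, the identity
\[
\mathbf{x}_{F_i} \;=\; \sum_{F \in N_i} \mathbf{x}_F \;+\; \mathbf{x}_{\overrightarrow{xya}} + \mathbf{x}_{\overrightarrow{ayz}} + 2\mathbf{x}_{\overrightarrow{xzy}},
\]
where $N_i$ is the set of seven new triples added at step $i$ (for $i=1$ the three ``shared'' triples are also new, and the identity reads $\mathbf{x}_{F_1} = \sum_{F\in P}\beta_F\mathbf{x}_F$ with $\beta_{\overrightarrow{xzy}}=2$ and $\beta_F=1$ otherwise). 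Using this identity I construct a positive cycle of $G_i$ from one of $G_{i-1}$: inherit all coefficients from $G_{i-1}$, increase the shared triples' coefficients by $\gamma_{F_i}^{(i-1)}$ (or $2\gamma_{F_i}^{(i-1)}$ for $\overrightarrow{xzy}$), and assign $\gamma_{F_i}^{(i-1)}$ to each of the seven new triples; positivity is clear.

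To show that $G_i$ is a \emph{single} cycle, I will exhibit an explicit linear isomorphism between the cycle spaces of $G_{i-1}$ and $G_i$. The key observation for $i\geq 2$ is that in any cycle of $G_i$ the seven new triples share a common coefficient. This is because the six $2$-sets $\{b_i,x\},\{b_i,y\},\{b_i,z\},\{c_i,x\},\{c_i,y\},\{c_i,z\}$ are touched only by the triples of $N_i$: since $b_i,c_i\in V(G)$ and neither equals any $b_j$ or $c_j$ for $j<i$ (star-system condition plus $b_i\neq c_j$), no previously attached triple connects $\{b_i,c_i\}$ to $\{x,y,z\}$. Each such $2$-set lies in exactly two triples of $N_i$ with opposite induced orientations (a direct check), so the six cycle equations chain together to force all seven coefficients to be equal. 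With this common value $\epsilon$ in hand, the inverse map sends a cycle of $G_i$ to one of $G_{i-1}$ by subtracting $\epsilon$ (or $2\epsilon$) from the shared triples and setting the $F_i$-coefficient to $\epsilon$; the projective-plane identity above verifies the result is a cycle of $G_{i-1}$. The base case $i=1$ is essentially the same: all $2$-sets involving $x$, $y$, or $z$ are touched only by $P$-triples, the $P$-columns are linearly independent (since $P\cup\{\overrightarrow{acb}\}$ is a single cycle), and any $P$-chain supported on the three edges of the triangle $\{a,b_1,c_1\}$ is a multiple of $\mathbf{x}_{F_1}$ (as a $1$-cycle supported on a triangle).

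Finally, for the star system of size $d+1$ in $G'=G_d$, I take the triples $\overrightarrow{xya}$ together with $\overrightarrow{b_ic_iy}$ for $i=1,\ldots,d$. All contain the new vertex $y$, and the pairwise intersections reduce to $\{y\}$: $\{x,a\}\cap\{b_i,c_i\}=\emptyset$ since $x$ is new and $a\ne b_i,c_i$, while $\{b_i,c_i\}\cap\{b_j,c_j\}=\emptyset$ for $i\ne j$ by the star-system property of $\mathcal{F}$. The main technical hurdle will be the orientation bookkeeping in the six-edge forcing argument---one must verify that in each of the six pairs the two triples really do induce opposite orientations on the shared $2$-set, so that the cycle equations yield equalities of coefficients rather than forcing one of them to vanish.
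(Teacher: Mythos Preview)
Your proof is correct and follows essentially the same route as the paper: induction on $i$, an explicit positive combination coming from the projective-plane identity, and the key observation that each of the six $2$-sets $\{b_i,x\},\{b_i,y\},\{b_i,z\},\{c_i,x\},\{c_i,y\},\{c_i,z\}$ lies in exactly two (oppositely oriented) triples of the new batch, forcing their coefficients to agree. The only cosmetic differences are that you phrase the uniqueness step as a linear isomorphism of real cycle spaces (implicitly using that a single cycle has one-dimensional kernel), whereas the paper works directly with positive cycles, and your choice of the extra star-system triple is $\overrightarrow{xya}$ rather than the paper's $\overrightarrow{xzy}$; both are valid.
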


 \begin{proof}
  Let $G$, $\mathcal{F}$ and $G'$ be as in the lemma. We first show that $\mathcal{E}(G')$ is a cycle. Let $A$ be the incidence matrix of $G$. Since $G$ is a single cycle, there exists positive $\alpha_F$ for every triple $F$ in $G$ such that 
  \[\sum_{F\in\mathcal{E}(G)} \alpha_F \mathbf{x}_F=\mathbf{0},\]
  where $\mathbf{x}_F$ is the column vector of $A$ that corresponds to the oriented triple $F$.
  
  Now let $\mathcal{P}_i=\left\{\overrightarrow{azc_i},\overrightarrow{c_izx},\overrightarrow{c_ixy},\overrightarrow{b_ic_iy},\overrightarrow{b_iyz},\overrightarrow{b_izx},\overrightarrow{ab_ix}\right\}$, $\mathcal{X} = \left\{\overrightarrow{xya},\overrightarrow{ayz}\right\}$ and $R=\overrightarrow{xzy}$. 
  With a slight abuse of notation, we now refer $\mathbf{x}_F$ to be the column vector of the incidence matrix of $G'$ that corresponds to the triple $F$ in $G'$.
  It is then straightforward to see that 
  \[\left(\sum_{F\in\mathcal{E}(G)\setminus \mathcal{F}} \alpha_F \mathbf{x}_F \right)+ \sum_{i=1}^{d} \left(\alpha_{F_i}\sum_{F\in\mathcal{P}_i} \mathbf{x}_F\right)+\left(\sum_{i=1}^{d} \alpha_{F_i}\right)\left(2\mathbf{x}_R+\sum_{F\in\mathcal{X}} \mathbf{x}_F\right)=\mathbf{0}.\]

  By construction, the $P$-attachment $G'$ is a cycle on $k+3$ vertices and has $l+6d+3$ triples. Indeed, $G_1$, the $(P,T_1)$-attachment on $G$, has $l+9$ triples, and $G_i$, the modified $(P,F_i)$-attachment on $G_{i-1}$, has six additional triples.
  Also, note that the set of triples $\mathcal{F}'=\left\{\overrightarrow{yb_ic_i}:i=1,2,\ldots,d\right\}\cup \left\{\overrightarrow{yxz}\right\}$ in $G'$ is a star system of size $d+1$.
    
  So we only need to show that $G'$ is a single cycle, that is, the only cycle in $G'$ consists of all the triples in $G'$. We will do this by showing that $G_i$ is a single cycle for each $i\in \{1,2,\ldots, d\}$. 
  Let $G_0 = G$ and we note that for $i\ge 1$, $G_i$ is obtained from $G_{i-1}$ by deleting the triple $F_i=\overrightarrow{ab_ic_i}$ and adding the set of triples $\mathcal{S}$. Here,
  $\mathcal{S}=\left\{\overrightarrow{azc_1},\overrightarrow{c_1zx},\overrightarrow{c_1xy},\overrightarrow{b_1c_1y},\overrightarrow{b_1yz},\overrightarrow{b_1zx},\overrightarrow{ab_1x},\overrightarrow{xya},\overrightarrow{ayz},\overrightarrow{xzy}\right\}$ for $i=1$ and 
  $\mathcal{S}=\left\{\overrightarrow{azc_i},\overrightarrow{c_izx},\overrightarrow{c_ixy},\overrightarrow{b_ic_iy},\overrightarrow{b_iyz},\overrightarrow{b_izx},\overrightarrow{ab_ix}\right\}$ for $i\ge 2$.
  
  Now, for $i\ge 1$, suppose that $G_{i-1}$ is a single cycle and $\mathcal{C}$ is a cycle in $G_i$. If $\mathcal{C}\cap \mathcal{S}=\emptyset$, then $\mathcal{C}\subset \mathcal{E}(G_{i-1})$ is a cycle of length strictly shorter than $|\mathcal{E}(G_{i-1})|$ as $F_i\notin \mathcal{C}$, contradicting $G_{i-1}$ is a single cycle. So we may assume $\mathcal{C}$ contains at least one triple in $\mathcal{S}$, and this will imply that $\mathcal{C} \supset \mathcal{S}$.
  This is because the only two triples that contain the 2-set $b_ix$ (also the two sets $b_iz,b_iy,c_iy,c_ix,c_iz$) are both in $\mathcal{S}$, inducing opposite directions of $b_ix$. And if $b_ix$ is contained in a triple in $\mathcal{C}$, both these triples must be in $\mathcal{C}$. Using similar arguments (by considering the 2-sets $ax,ay$ and $az$), we can further claim that the triples $\overrightarrow{xya},\overrightarrow{ayz}$ and $\overrightarrow{xzy}$ are also in $\mathcal{C}$.
  
  We can then write $\mathcal{C} = \mathcal{C}'\cup \mathcal{S}$, where $\mathcal{C}'\subset \mathcal{E}(G_{i-1})$ and $\mathcal{C}'\supset \left\{\overrightarrow{xya},\overrightarrow{ayz},\overrightarrow{xzy}\right\}$. It is then straightforward to check that $\mathcal{C}'\cup \{F_i\}$ is a cycle in $G_i$, and hence a cycle in $G_{i-1}$. Since $G_{i-1}$ is a single cycle, necessarily $\mathcal{C}'\cup \{F_i\} = \mathcal{E}(G_{i-1})$, implying $\mathcal{C} = \mathcal{E}(G_i)$, completing the proof of the lemma.
 \end{proof}

By repeatedly applying Lemma~\ref{p-attachment} to a single cycle, we can construct a single cycle with increasing length.

\begin{corollary}\label{p-bound}
 There exists an oriented $3$-graph on $n$ vertices whose shortest cycle has length $\frac{2}{3}\binom{n}{2}(1+o(1))$.
\end{corollary}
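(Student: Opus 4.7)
The plan is to iterate Lemma~\ref{p-attachment} starting from a small base single cycle. The crucial feature of that lemma is that its conclusion provides not just a longer single cycle but a star system of size $d+1$ for the next application: so each successive attachment uses a star system one larger than the previous one and therefore adds more triples, giving quadratic growth in the number of triples against linear growth in the number of vertices.

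Concretely, I would take the base $G_0$ to be the directed 4-set $\left([4],\left\{\overrightarrow{123},\overrightarrow{142},\overrightarrow{134},\overrightarrow{243}\right\}\right)$ mentioned earlier in the paper, which is a single cycle of length 4, and declare $\mathcal{F}_0 = \{\overrightarrow{123}\}$ to be a star system of size 1 (the disjointness condition $b_i \neq c_j$ is vacuous for a singleton). Inductively, I would define $G_{i+1}$ to be the $(P,\mathcal{F}_i)$-attachment of $G_i$, where $\mathcal{F}_i$ is the star system of size $i+1$ given by the conclusion of Lemma~\ref{p-attachment} applied at the previous stage. Lemma~\ref{p-attachment} then guarantees that each $G_i$ is a single cycle, so its shortest cycle equals $\mathcal{E}(G_i)$.

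The routine bookkeeping goes as follows: $G_t$ has $n := 4+3t$ vertices and, using $|\mathcal{F}_i| = i+1$, has length $l_t = 4 + \sum_{i=0}^{t-1}\bigl(6(i+1)+3\bigr) = 3t^2 + 6t + 4$. Substituting $t=(n-4)/3$ gives $l_t = \frac{n^2}{3}(1+o(1)) = \frac{2}{3}\binom{n}{2}(1+o(1))$, as required. For values of $n$ not of the form $4+3t$ I would simply pad by one or two isolated vertices, which does not affect the length of the (unique) shortest cycle.

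There is no real obstacle here; the only point that warrants a moment's check is that the disjointness hypothesis $b_i \neq c_j$ in Lemma~\ref{p-attachment} persists through the induction. But the star system produced by that lemma is $\{\overrightarrow{yb_ic_i}:i=1,\ldots,d\}\cup\{\overrightarrow{yxz}\}$, whose second and third coordinates are $b_1,\ldots,b_d,x$ and $c_1,\ldots,c_d,z$ respectively; the old condition $b_i \neq c_j$ is inherited by hypothesis, and since $x,y,z$ are new vertices disjoint from $V(G_{i-1})$, no new coincidence can occur. So the proof is essentially one clean induction, with all the nontrivial content — that attaching $P$ along a star system preserves the single-cycle property and enlarges the star system — already packaged inside Lemma~\ref{p-attachment}.
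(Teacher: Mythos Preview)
Your proof is correct and essentially identical to the paper's: same base $G_0$, same iteration of Lemma~\ref{p-attachment}, and the same arithmetic yielding $3t^2+6t+4$ triples on $4+3t$ vertices. Your two additional remarks---padding by isolated vertices for general $n$, and verifying that the $b_i\neq c_j$ condition persists through the induction---are sound (indeed the latter is automatic from the star-system intersection condition, since $x,z$ are new), though the paper does not make them explicit.
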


\begin{proof}
 Let $G_0$ be a single cycle on $4$ vertices with $\mathcal{E}(G_0)=\left\{\overrightarrow{123},\overrightarrow{142},\overrightarrow{134},\overrightarrow{243}\right\}$ and let $\mathcal{F}_0=\left\{\overrightarrow{123}\right\}$. For $i\ge 1$, suppose $G_{i-1}$ is a single cycle containing a star system $\mathcal{F}_{i-1}$. Then by Lemma~\ref{p-attachment}, there exists a single cycle $G_i$ containing a star system $\mathcal{F}_i$ of size $|\mathcal{F}_{i-1}|+1$.
 
 By construction, $G_i$ has $4+3i$ vertices, $|\mathcal{F}_i|=i+1$, and $G_i$ is a single cycle of length
 \begin{align*}
  |\mathcal{E}(G_{i})|=&|\mathcal{E}(G_{i-1})|+6|\mathcal{F}_{i-1}|+3\\
		      =&\left(|\mathcal{E}(G_{i-2})|+6|\mathcal{F}_{i-2}|+3\right)+6|\mathcal{F}_{i-1}|+3\\
		      & \vdots\\
		      =&|\mathcal{E}(G_{0})|+6(|\mathcal{F}_{0}|+|\mathcal{F}_{1}|+\ldots+|\mathcal{F}_{i-1}|)+3i\\
		      =&4+6(1+2+\ldots+i)+3i\\
		      =&3i^2+6i+4.
 \end{align*}

 Now, letting $G=G_k$, we see that $G$ is a single cycle on $n=4+3k$ vertices of length \[\frac{n^2-2n+4}{3}=\frac{2}{3}\binom{n}{2}(1-o(1)).\] 
\end{proof}

\subsection{Attaching a modified projective plane} 
 
In order to improve the lower bound in Corollary~\ref{p-bound}, we can try to use a better base single cycle in the inductive construction. 
Very strangely, it turns out that this will lead to a much improved construction.

By an increment of 3 vertices, Lemma~\ref{p-attachment} produces a larger single cycle, as well as a star system with one extra triple. It would be better if we could have a base single cycle where the new single cycle produced has less than 6 extra vertices and the star system is enlarged by two extra triples. 
 
\begin{center}
\begin{tikzpicture}
 \draw[thick] (0,1.1) node[anchor = north]{$a$} -- (4/3,-0.9) -- (-4/3,-0.9) -- cycle; 
 \node[thick] at (-1,-0.67) {$b$};
 \node[thick] at (1,-0.67) {$c$};
 \draw[thick] (0,0) circle (3cm);
 
 \draw[thick] (0,3) node[anchor = south]{$y$} -- (0,1.1);
 \draw[thick] (0,3) -- (-1.2,1.1) node[anchor = north west] {$t_1$} -- (0,1.1);
 \draw[thick] (3,0) arc (0:30:3cm) -- (1.4,0.65) node[anchor = north east] {$t_2$};
 \draw[thick] (0,1.1) -- (1.4,0.65) -- (4/3,-0.9);
 \draw[thick] (3,0) arc (0:150:3cm) node[anchor = south east]{$x$} -- (-1.2,1.1) -- (-4/3,-0.9);
 \draw[thick] (3,0) arc (0:150:3cm) -- (-4/3,-0.9);
 \draw[thick] (3,0) arc (0:210:3cm) node[anchor = north east]{$z$} -- (-4/3,-0.9);
 \draw[thick] (4/3,-0.9) -- (0,-3) node[anchor = north]{$y$} -- (-4/3,-0.9); 
 \draw[thick] (3,0) arc (0:210:3cm) -- (-4/3,-0.9);
 \draw[thick] (3,0) arc (0:330:3cm) node[anchor = north west]{$x$}-- (4/3,-0.9);
 \draw[thick] (3,0) arc (0:30:3cm) node[anchor = south west]{$z$} -- (4/3,-0.9);
 \draw[thick] (3,0) arc (0:30:3cm) -- (0,1.1);
 \node at (0,-4) {\textbf{Fig.2 }  The modified triangulation of the real projective plane};
\end{tikzpicture}
\end{center}
 
Consider the modified triangulation of the projective plane in Fig. 2, where each of the 14 faces (triples) are oriented clockwise. Adding the triple $\{x,y,z\}$ with anticlockwise orientation $\overrightarrow{xzy}$ gives a cycle  (in fact a single cycle, where the triple $xyz$ has coefficient 2, while the other triples each has coefficient 1). Now delete the triple $\overrightarrow{acb}$ and identify $t_1$ and $t_2$ (denote by $t_1=t_2=t$) to obtain the oriented 3-graph $S$. It is straightforward to check that $S$ is a single cycle with a triple removed.

So $S$ is an oriented 3-graph on 7 vertices and has 14 triples, where $V(S)=\{x,y,z,t,a,b,c\}$ and $\mathcal{E}(S)=\{\overrightarrow{xyt},\overrightarrow{aty},\overrightarrow{ayz},\overrightarrow{azt},\overrightarrow{atc},\overrightarrow{ctz},\overrightarrow{czx},\overrightarrow{cxy},\overrightarrow{cyb},\overrightarrow{byz},\overrightarrow{bzx},\overrightarrow{bxt},\overrightarrow{bta},\overrightarrow{xzy}\}$.

For an oriented 3-graph $G$ that contains a triple $F=\overrightarrow{ijk}$, we can define the \emph{$(S,F)$-attachment on $G$} as in the previous subsection. That is, we say $G'$ is the \emph{$(S,F)$-attachment on $G$} (or simply \emph{$S$-attachment on $G$} when $F$ is understood) where $G'$ has vertex set $V(G')=V(G)\cup V(S)$ with $i,j,k$ identified with $a,b,c$ respectively, and set of triples $\mathcal{E}(G') =\mathcal{E}(G)\cup \mathcal{E}(S) \setminus \left\{\overrightarrow{ijk}\right\}$. 
So if $G$ is an oriented 3-graph on $k$ vertices and has $l$ triples, then the $S$-attachment on $G$ is an oriented 3-graph on $k+4$ vertices and has $l+13$ triples.
 
Similarly, we define the \emph{$(S,\mathcal{F})$-attachment on $G$} for an oriented 3-graph $G$ on $k$ vertices, where $\mathcal{F}=\left\{F_i=\overrightarrow{ab_ic_i}:i=1,2,\ldots,d\right\}$ is a star system in $G$, as follows. Let $G_1$ be the $(S,F_1)$-attachment on $G$. 
And for $i\ge 2$, $G_i$ is obtained from $G_{i-1}$ by deleting the triple $F_i=\overrightarrow{ab_ic_i}$ and adding the triples $\left\{\overrightarrow{atc_i},\overrightarrow{c_itz},\overrightarrow{c_izx},\overrightarrow{c_ixy},\overrightarrow{c_iyb_i},\overrightarrow{b_iyz},\overrightarrow{b_izx},\overrightarrow{tb_ix},\overrightarrow{b_ita}\right\}$. Then $G_d$ is the $(S,\mathcal{F})$-attachment on $G$.
 
Given a single cycle with a set of triples with a certain property - a star system - we can attach $S$ inductively in such a way that each $S$-attachment produces a larger single cycle that contains a larger star system. The following lemma, which is very similar to Lemma~\ref{p-attachment}, is the key method in our inductive construction. The proof is similar to that of Lemma~\ref{p-attachment} (with extra details) and so is omitted. 
 
\begin{lemma}\label{s-attachment}
 Let $G$ be a single cycle (an oriented $3$-graph) on $k$ vertices of length $l$. Suppose $\mathcal{F}=\left\{F_i = \overrightarrow{ab_ic_i}: i=1,2,\ldots, d, b_i\ne c_j \text{ for all }i,j\right\}$ is a star system in $G$. Then the $(S,\mathcal{F}$)-attachment on $G$, $G'$, is a single cycle on $k+4$ vertices of length $l+8d+5$. Furthermore, $G'$ contains a star system of size $d+2$. \qed
 \end{lemma}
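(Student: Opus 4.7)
The argument will run almost verbatim to the proof of Lemma~\ref{p-attachment}, with the extra vertex $t$ demanding more bookkeeping but no new ideas. First I would verify that $\mathcal{E}(G')$ is a cycle by exhibiting explicit positive coefficients. Since $S$ is itself a single cycle in which $\overrightarrow{xzy}$ has coefficient $2$ and every other triple has coefficient $1$, one takes the coefficient of each triple added in the $(S,F_i)$-attachment step to be $\alpha_{F_i}$ (the coefficient of $F_i$ in the cycle relation for $G$), the coefficient of the ``shared'' triples $\overrightarrow{xyt}, \overrightarrow{aty}, \overrightarrow{ayz}, \overrightarrow{azt}$ to be $\sum_i \alpha_{F_i}$, and the coefficient of $\overrightarrow{xzy}$ to be $2\sum_i \alpha_{F_i}$. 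A direct verification gives zero net weight on every 2-set.

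For the counts, the first $(S,F_1)$-attachment introduces four new vertices $x,y,z,t$ and contributes $|\mathcal{E}(S)|-1 = 13$ new triples; each subsequent modified $(S,F_i)$-attachment (for $i\ge 2$) adds the nine listed triples and deletes $F_i$, with the five remaining triples of $S$ being duplicates of triples already present in $G_{i-1}$ with the same orientation, for a net change of $+8$. Summing yields $l+13+8(d-1) = l+8d+5$ triples on $k+4$ vertices.

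The main step, as in Lemma~\ref{p-attachment}, is the inductive proof that each $G_i$ is a single cycle given that $G_{i-1}$ is. Let $\mathcal{S}_i$ denote the triples added at step $i$. If a cycle $\mathcal{C}\subseteq\mathcal{E}(G_i)$ is disjoint from $\mathcal{S}_i$, then $\mathcal{C}\subseteq \mathcal{E}(G_{i-1})\setminus\{F_i\}$ would be a proper cycle in $G_{i-1}$, contradicting the inductive hypothesis. Otherwise, I would run a cascade of forced inclusions: for $i\ge 2$, each of the 2-sets $b_ix, b_iy, b_iz, b_it, c_ix, c_iy, c_iz, c_it$ lies in exactly two triples of $G_i$, both in $\mathcal{S}_i$ and inducing opposite orientations on that 2-set, so a single triple of $\mathcal{S}_i$ in $\mathcal{C}$ drags in all nine by chaining through these 2-sets. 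For $i=1$ one uses additionally the internal 2-sets $xy, xz, yz, xt, yt, zt, ax, ay, az, at$ to sweep in the remaining four ``shared'' triples and $\overrightarrow{xzy}$. Having shown $\mathcal{C}\supseteq \mathcal{S}_i$, subtracting the witnessing cycle relation on $\mathcal{S}_i\cup\{F_i\}$ exhibits $\mathcal{C}\setminus\mathcal{S}_i$ together with $F_i$ as a cycle in $G_{i-1}$, which by hypothesis must be all of $\mathcal{E}(G_{i-1})$, giving $\mathcal{C}=\mathcal{E}(G_i)$.

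For the star system, I would take $\mathcal{F}' = \{\overrightarrow{yb_ic_i} : 1\le i\le d\} \cup \{\overrightarrow{xyt}, \overrightarrow{ayz}\}$, a family of $d+2$ triples all containing $y$. The hypothesis that the $b_i$ and $c_j$ are distinct from one another and from $a$, together with the novelty of $x,z,t$, reduces every pairwise intersection to $\{y\}$. The main obstacle is simply the enumeration in the cascade step, which is lengthier here than in Lemma~\ref{p-attachment} because $S$ has $14$ triples rather than $10$ and the identifications at $t$ across stages must be kept straight; conceptually, however, the argument is identical.
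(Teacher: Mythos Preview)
Your proposal is essentially correct and matches the paper's own approach: the paper omits the proof of this lemma, stating only that it is ``similar to that of Lemma~\ref{p-attachment} (with extra details)'', and your outline does exactly that. The triple count, the cascade argument via the 2-sets $\{b_i,c_i\}\times\{x,y,z,t\}$, and the inductive reduction to $G_{i-1}$ all parallel the $P$-case faithfully.

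Two small remarks. First, your phrase ``subtracting the witnessing cycle relation on $\mathcal{S}_i\cup\{F_i\}$'' is slightly imprecise for $i\ge 2$, since $\mathcal{S}_i$ by itself carries nonzero net weight on the internal 2-sets $xy,xz,yz,xt,zt$; the replacement of $\mathcal{S}_i$ by $F_i$ therefore does not leave the coefficients on $\mathcal{C}'$ untouched on those edges, and one must argue (as in Lemma~\ref{p-attachment}) that $\mathcal{C}'\cup\{F_i\}$ is a cycle in $G_{i-1}$ rather than that the specific coefficients transfer. This is the same hand-wave the paper makes in the $P$-case, so you are in good company, but it is worth being aware of. Second, your star system $\{\overrightarrow{yb_ic_i}\}\cup\{\overrightarrow{xyt},\overrightarrow{ayz}\}$ is valid, though the paper (judging from the base case $\mathcal{F}_0=\{\overrightarrow{ybc},\overrightarrow{yat},\overrightarrow{yxz}\}$ in the subsequent corollary) evidently has $\{\overrightarrow{yb_ic_i}\}\cup\{\overrightarrow{yat},\overrightarrow{yxz}\}$ in mind; either choice works.
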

 
By repeatedly applying Lemma~\ref{s-attachment}, we now obtain a single cycle of length $\binom{n}{2}(1+o(1))$. By our earlier remarks (Corollary~\ref{cycle_tournament_upper}), this is asymptotically best possible.

\begin{corollary}
  There exists an oriented $3$-graph on $n$ vertices whose shortest cycle has length $\binom{n}{2}(1+o(1))$.
\end{corollary}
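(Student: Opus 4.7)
The plan is to mimic the proof of Corollary~\ref{p-bound}, but using the $S$-attachment of Lemma~\ref{s-attachment} in place of the $P$-attachment. Concretely, I would start with $G_0$ the single cycle on $4$ vertices from Corollary~\ref{p-bound}, with triple set $\{\overrightarrow{123},\overrightarrow{142},\overrightarrow{134},\overrightarrow{243}\}$ and the star system $\mathcal{F}_0=\{\overrightarrow{123}\}$ of size $1$ centred at the vertex $1$. Then, inductively for $i\ge 1$, I would let $G_i$ be the $(S,\mathcal{F}_{i-1})$-attachment on $G_{i-1}$, whose existence and properties are handed to us by Lemma~\ref{s-attachment}.

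The rest is just book-keeping. By Lemma~\ref{s-attachment}, each step adds exactly $4$ new vertices, increases the length of the single cycle by $8|\mathcal{F}_{i-1}|+5$, and produces a star system $\mathcal{F}_i$ of size $|\mathcal{F}_{i-1}|+2$. So $|V(G_i)|=4+4i$, $|\mathcal{F}_i|=1+2i$, and
\[
|\mathcal{E}(G_i)| = |\mathcal{E}(G_0)| + \sum_{j=1}^{i}\bigl(8|\mathcal{F}_{j-1}|+5\bigr) = 4 + \sum_{j=1}^{i}\bigl(8(2j-1)+5\bigr) = 8i^2+5i+4.
\]
Writing $n=4+4i$, this is $\tfrac{n^2}{2}(1+o(1))=\binom{n}{2}(1+o(1))$, which gives the desired bound when $n\equiv 0\pmod 4$.

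To handle $n$ not of the form $4+4i$, I would simply take the largest $i$ with $4+4i\le n$ and augment $G_i$ with $n-4-4i\le 3$ extra isolated vertices; the girth is unchanged, and the error in the length is $O(n)$, which is absorbed into the $o(1)$ factor. Since the asymptotic upper bound $\binom{n-1}{2}+1$ was already noted at the start of Section~3, the matching lower bound completes the proof.

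There is essentially no obstacle here: Lemma~\ref{s-attachment} does all the work. The only thing one needs to verify on the ground is that the star system $\mathcal{F}_0$ satisfies the non-degeneracy hypothesis $b_i\ne c_j$ of Lemma~\ref{s-attachment} (which is trivial because $|\mathcal{F}_0|=1$), and that the star systems $\mathcal{F}_i$ produced by successive attachments continue to satisfy it; but that is guaranteed by the construction, since each $\mathcal{F}_i$ is a genuine star system as asserted by the lemma, and the new `$b$' and `$c$' vertices arise from the fresh copies of $V(S)\setminus\{a,b,c\}$ introduced at each stage.
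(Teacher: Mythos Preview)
Your proposal is correct and follows essentially the same approach as the paper: iterate Lemma~\ref{s-attachment} starting from a small single cycle, track the growth of the star system and the cycle length, and read off the asymptotic. The only difference is the choice of seed---you start from the directed $4$-set with a star system of size~$1$, whereas the paper starts from the $7$-vertex single cycle $S\cup\{\overrightarrow{acb}\}$ with a star system of size~$3$---and you additionally handle $n\not\equiv 0\pmod 4$ by padding with isolated vertices, which the paper omits; neither change affects the argument.
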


\begin{proof}
 
 Let $G_0$ be the single cycle on $7$ vertices obtained from adding the triple $\overrightarrow{acb}$ to $S$. It has 15 triples and a `good' set of triples $\mathcal{F}_0=\left\{\overrightarrow{ybc},\overrightarrow{yat},\overrightarrow{yxz}\right\}$.
 For $i\ge 1$, suppose $G_{i-1}$ is a single cycle containing a set of triples $\mathcal{F}_{i-1}$ satisfying the property in Lemma~\ref{s-attachment}. Then there exists a single cycle $G_i$ containing a set of triples $\mathcal{F}_i$, again satisfying the property in Lemma~\ref{s-attachment}.
 
 By construction, $G_i$ has $7+4i$ vertices, $|\mathcal{F}_i|=2i+3$, and $G_i$ is a single cycle of length
 \begin{align*}
  |\mathcal{E}(G_{i})|=&|\mathcal{E}(G_{i-1})|+8|\mathcal{F}_{i-1}|+5\\
		      =&\left(|\mathcal{E}(G_{i-2})|+8|\mathcal{F}_{i-2}|+5\right)+8|\mathcal{F}_{i-1}|+5\\
		      & \vdots\\
		      =&|\mathcal{E}(G_{0})|+8(|\mathcal{F}_{0}|+|\mathcal{F}_{1}|+\ldots+|\mathcal{F}_{i-1}|)+5i\\
		      =&15+8\left(3+\ldots+(2i+1)\right)+5i\\
		      =&8i^2+21i+15.
 \end{align*}

 Now, letting $G=G_k$, we see that $G$ is a single cycle on $n=7+4k$ vertices of length \[\frac{2n^2-7n+11}{4}=\binom{n}{2}(1-o(1)).\]
\end{proof}

We remark that the inductive construction above is very far from being an optimal single cycle in a 3-tournament. Indeed, for any single cycle $G$ and any triple $F$ in $G$, the $(S,F)$-attachment on $G$ has the property that any orientation of $F$ will give a shorter cycle.
Note also that the above construction also shows that the rank of the vector space spanned by its incidence matrix has rank at least $\binom{n}{2}(1+o(1))$, implying that the bound in Lemma~\ref{lemma_rank} is asymptotically best possible.

\section{Concluding remarks}

In this paper we have addressed of the girth of 3-tournaments and oriented 3-graphs. Linial and Morgenstern~\cite{linial} also considered cycles in higher order tournaments: $d$-tournaments for general $d$. (See \cite{linial} for relevant definitions.) It would be interesting to know how girth behaves there. 
There is again a linear algebra bound of $\binom{n}{d-1}+1$: how close is this to being attained?

Finally, although these questions arose naturally in the context of oriented 3-graphs and $d$-graphs, it would be interesting to know what happens in the undirected case.

\end{document}